\numberwithin{equation}{section}
\newtheorem{lema}{Lemma}[section]
\newtheorem{theorem}[lema]{Theorem}
\newtheorem{prop}[lema]{Proposition}
\newtheorem*{maintheorem}{Main Theorem}
\theoremstyle{definition}
\newtheorem{definition}[lema]{Definition}
\theoremstyle{remark}
\newtheorem{obs}[lema]{Remark}
\theoremstyle{plain}
\newcommand{\nc}{\newcommand}
\renewcommand{\_}[1]{\mbox{$_{\left( #1 \right)}$}}
\newcommand{\supp}{\operatorname{supp}}
\nc{\im}{\mathtt{i}}
\newcommand{\hmh}{{}_H\hspace{-1pt}{\mathcal M}_H}
\newcommand{\hm}{{}_H\hspace{-1pt}{\mathcal M}}
\newcommand{\kgmkg}{{}_{\ku^G}\hspace{-1pt}{\mathcal M}_{\ku^G}}
\nc{\RR}{{\mathbb R}} \nc{\CC}{{\mathbb C}} \nc{\ZZ}{{\mathbb Z}}
\nc{\FF}{{\mathbb F}} \nc{\NN}{{\mathbb N}} \nc{\QQ}{{\mathbb Q}}
\nc{\PP}{{\mathbb P}} \nc{\DD}{{\mathbb D}} \nc{\Sn}{{\mathbb S}}
\nc{\uno}{\mathbb{1}}
\nc{\vs}{\vspace{.5cm}}
\nc{\no}{\smallbreak\noindent}
\newcommand\id{\operatorname{id}}
\newcommand\ad{\operatorname{ad}}
\newcommand\gr{\operatorname{gr}}
\newcommand\co{\operatorname{co}}
\newcommand\End{\operatorname{End}}
\newcommand\Ind{\operatorname{Ind}}
\newcommand{\eps}{\varepsilon}
\newcommand{\ot}{\otimes}
\newcommand\sgn{\operatorname{sgn}}
 \nc{\D}{\Delta} \nc{\e}{\varepsilon}
\nc{\GL}{\operatorname{GL}} \nc{\wact}{\rightharpoonup}
\nc{\Tr}{\mathrm{Tr}} \nc{\cark}{char\,k} \nc{\adl}{\ad_\ell}
\nc{\ku}{\Bbbk}
\newcommand{\ydg}{{}_{\ku G}^{\ku G}\mathcal{YD}}
\newcommand{\ydgdual}{{}_{\ku^G}^{\ku^G}\mathcal{YD}}
\newcommand{\ydh}{{}^H_H\mathcal{YD}}
\newcommand{\ydhdual}{{}^{H^*}_{H^*}\mathcal{YD}}
\newcommand{\ydsn}{{}^{\ku{\Sn_n}}_{\ku{\Sn_n}}\mathcal{YD}}
\newcommand{\ydsnd}{{}^{\ku^{\Sn_n}}_{\ku^{\Sn_n}}\mathcal{YD}}
\newcommand{\ydst}{{}^{\ku^{\Sn_3}}_{\ku^{\Sn_3}}\mathcal{YD}}
\nc{\yd}{\mathcal{YD}}
\nc{\ydSndual}[1]{{}^{\ku^{\Sn_{#1}}}_{\ku^{\Sn_{#1}}}\yd}
\nc{\cP}{\mathcal{P}} \nc{\cU}{\mathcal{U}}
\nc{\cE}{\mathcal{E}} \nc{\cS}{\mathcal{S}}
\nc{\cC}{\mathcal{C}} \nc{\cO}{\mathcal{O}} \nc{\cQ}{\mathcal{Q}}
\nc{\cB}{\mathcal{B}} \nc{\cJ}{\mathcal{J}} \nc{\cI}{\mathcal{I}}
\nc{\fD}{\mathfrak{D}} \nc{\fI}{\mathfrak{I}} \nc{\fJ}{\mathfrak{J}}
\nc{\fS}{\mathfrak{S}}
\nc{\Ho}{H_0} \nc{\GH}{G(H)} \nc{\mas}{\oplus}
\nc{\coM}{\mathcal{M}^\ast(2,k)} \nc{\cA}{\mathcal{A}}
\nc{\PH}{\cP(H)}
\nc{\Ftwist}{\overset{\curvearrowright}F}
\nc{\rep}{{\mathcal Rep}(H)}
\newcommand{\BV}{\mathfrak{B}}
\newcommand{\xij}[1]{x_{(#1)}}
\newcommand{\dij}[1]{\delta_{#1}}
\newcommand{\Cg}[1]{C_{G}(#1)}
\newlength{\ancho}
\newlength{\alto}
\newcommand{\ao}[1]{
\settowidth{\ancho}{$#1$}
\settoheight{\alto}{$#1$}
\hbox{\vbox{  \hbox{{${}_a$}}\kern \alto}\vbox{\hrule height 0.5pt width \ancho \kern 1mm \hbox{$#1$}}}
}
\newcommand{\bo}[1]{
\settowidth{\ancho}{$#1$}
\settoheight{\alto}{$#1$}
\hbox{\vbox{ \hbox{${}_b$}\kern \alto   }\vbox{\hrule height 0.5pt width \ancho \kern 1mm \hbox{$#1$}}}
}
\begin{document}

\title[Hopf algebras with coradical $\ku^{\Sn_3}$]
{Finite dimensional Hopf algebras over the dual group algebra of the symmetric group in three letters}

\author[andruskiewitsch and vay]
{Nicol\'as Andruskiewitsch and Cristian Vay}

\address{FaMAF-CIEM (CONICET), Universidad Nacional de C\'ordoba,
Medina A\-llen\-de s/n, Ciudad Universitaria, 5000 C\' ordoba, Rep\'
ublica Argentina.} \email{andrus@famaf.unc.edu.ar,
vay@famaf.unc.edu.ar}

\thanks{\noindent 2000 \emph{Mathematics Subject Classification.}
16W30. \newline This work was partially supported by ANPCyT-Foncyt, CONICET, Ministerio de Ciencia y
Tecnolog\'{\i}a (C\'ordoba)  and Secyt (UNC). Part of the work of C. V. was done as a fellow of the Erasmus Mundus programme of the EU in the University of Antwerp. He thanks to Prof. Fred Van Oystaeyen for his warm hospitality and help.}

\date{\today}
\dedicatory{Dedicated to Mia Cohen with affection}
\begin{abstract}
We classify finite-dimensional Hopf algebras whose coradical is isomorphic to the algebra of functions on $\Sn_3$.
We describe a new infinite family of Hopf algebras of dimension 72.
\end{abstract}

\maketitle



\section{Introduction}

This work is a contribution to the  classification of finite-dimensional Hopf algebras over an algebraically closed field $\ku$ of characteristic $0$, problem posed by I. Kaplansky in 1975. We are specifically interested in Hopf algebras whose coradical is the dual of a non-abelian group algebra.
We note that there are very few classification results on finite-dimensional Hopf algebras whose coradical is a Hopf subalgebra but not a group algebra, a remarkable exception being  \cite{de1tipo6chevalley}.

Let $A$ be a Hopf algebra whose coradical $H$ is a Hopf subalgebra. It is well-known that the associated
graded Hopf algebra of $A$ is isomorphic to $R\#H$ where
$R = \oplus_{n\in \mathbb N_0}R^n$ is a braided Hopf algebra in the category $\ydh$ of Yetter-Drinfield modules over $H$. As explained in \cite{andrussch3}, to classify finite-dimensional Hopf algebras $A$ whose coradical is isomorphic to $H$ we have to deal with the following questions:

\begin{enumerate}\renewcommand{\theenumi}{\alph{enumi}}\renewcommand{\labelenumi}{(\theenumi)}
  \item\label{que:nichols-fd} Determine all Yetter-Drinfield modules $V$ over $H$ such that the Nichols algebra $\BV(V)$ has finite dimension; find an efficient set of relations for $\BV(V)$.

  \item\label{que:nichols-R} If $R = \oplus_{n\in \mathbb N_0}R^n$ is a finite-dimensional Hopf algebra in $\ydh$ with $V = R^1$, decide if $R \simeq \BV(V)$ (generation in degree one).

  \item\label{que:lifting} Given $V$ as in \eqref{que:nichols-fd}, classify all $A$ such that $\gr A \simeq \BV(V)\#H$ (lifting).
\end{enumerate}

Now the category $\ydhdual$ is braided equivalent to $\ydh$ see e.g. \cite[2.2.1]{andrusgrania}.
Therefore, the answers to the questions \eqref{que:nichols-fd} and \eqref{que:nichols-R} in $\ydhdual$ give the analogous answers in $\ydh$.
Thus, if $H = \ku^G$ is the algebra of functions on a finite group $G$, then we are reduced to know the solutions to questions \eqref{que:nichols-fd} and \eqref{que:nichols-R} for the group algebra $\ku G$. In the case $G = \Sn_3$, we know that
\begin{itemize}
  \item there is a unique $V\in \ydst$ such that $\dim\BV(V)<\infty$, namely $V = M((12),\sgn)$ \cite[Thm. 4.5]{AHS}.
  \item If $R = \oplus_{n\in \mathbb N_0}R^n$ is a finite-dimensional Hopf algebra in $\ydst$, then $R \simeq \BV(V)$ \cite[Thm. 2.1]{andrusgrania3}.
\end{itemize}

In this paper, we solve question \eqref{que:lifting} for $\ku^{\Sn_3}$. We introduce a family of Hopf algebras $\cA_{[a_1,a_2]}$, $(a_1,a_2)\in \ku^2$;
these are new as far as we know, except for $(a_1,a_2) = (0,0)$.
Let $\Gamma = \ku^{\times}\times\Sn_3$, where $\ku^{\times}$ is the group of the invertible elements of $\ku$.
We consider the right action $\triangleleft$ of the group  on $\ku^2$ defined by
\begin{equation}\label{equ:action}
(a_1,a_2)\triangleleft (\mu, (12)) =\mu(a_2,a_1),\quad (a_1,a_2)\triangleleft( \mu, (123))=-\mu(a_2,a_2-a_1).
\end{equation}
We denote by $[a_1,a_2]\in \Gamma\backslash\ku^2$ the equivalence class of $(a_1,a_2)$ under this action; notice that $\Gamma\backslash\ku^2$
is infinite.
We prove in Theorem \ref{main result}:

\begin{maintheorem}
The set of isomorphism classes of finite-dimensional non-semisimple Hopf algebras with coradical $\ku^{\Sn_3}$ is
in bijective correspondence with $\Gamma\backslash\ku^2$ via the assignment $[a_1,a_2]\leftrightsquigarrow\cA_{[a_1,a_2]}$.
\end{maintheorem}

To show that the Hopf algebras $\cA_{[a_1,a_2]}$ have the right dimension we use the Diamond Lemma. To prove that any
finite-dimensional non-semisimple Hopf algebra $A$ with coradical $\ku^{\Sn_3}$ belongs to this family,
we describe the first term $A_1$ of the coradical filtration using \cite[Thm. 5.9.c)]{ardizzonimstefan}.

\subsection*{Acknowledgment} N. A. thanks Margaret Beattie for conversations on this problem in St. John's, May 2008.

\section{Preliminaries}\label{Sec: preliminaries}
\subsection{Notation} We fix an algebraically closed field $\ku$ of characteristic zero. If $V$ is a vector space,
$T(V)=\oplus_{n\geq0}V^{\ot n}$ is the tensor algebra of $V$. If there is no place for confusion,
we write $a_1\cdots a_n$ instead of $a_1\ot\cdots\ot a_n$ for $a_1,...,a_n\in V$.
If $G$ is a group, we denote by $e$ its identity element, by $\ku G$ the group algebra of $G$ and by $\ku^G$ the
dual group algebra, that is the algebra of functions on $G$.  We denote by $\Sn_n$ the symmetric group on $n$ letters.
We use Sweedler notation but dropping the summation symbol. If $H$ is an algebra, then we denote by $\hm$, resp. $\hmh$,
the category of left $H$-modules, resp. $H$-bimodules.

\subsection{The coradical filtration}
Let $C$ be a coalgebra with comultiplication $\D$. The coradical $C_0$ of $C$ is the sum of all simple subcoalgebras of $C$.
The coradical filtration of $C$ is the family of subspaces defined inductively by $C_n:=C_0\wedge C_{n-1}=\D^{-1}(C\ot C_0+C_{n-1}\ot C)$
for each $n\geq1$. Then
\begin{equation}\label{eq: properties of coradical filtration}
C_n\subseteq C_{n+1},\quad C=\bigcup_{n\geq0}C_n\quad\mbox{and}\quad\D(C_n)\subseteq\sum_{i=0}^nC_i\ot C_{n-i},
\end{equation}
for all $n\geq0$ \cite[Thm. 5.2.2]{mongomeri}. We denote by $\gr C=\oplus_{n\geq0}\gr^n C$ the associated graded coalgebra of $C$;
as vector space $\gr^n C:=C_n/C_{n-1}$ for all $n\geq0$ where $C_{-1}=0$. We denote by $G(C)$ the group of group-like elements of $C$.
As usual, for $g,h\in G(C)$, $\cP_{g,h}(C)=\{x\in C_1|\D(x)=g\ot x+x\ot h\}$ is the space of $(g,h)$-skew primitive elements of $C$.
If $C$ is a bialgebra and $g=h=1$,
we write simply $\cP(C)$ instead of $\cP_{1,1}(C)$ and $x\in\cP(C)$ is called a primitive element.
A coalgebra $E$ is called a \emph{matrix coalgebra} of rank $n$ if it has a basis $(e_{ij})_{1\le i,j \le n}$ such that the
comultiplication and counit are defined by $\D(e_{ij})=\sum_{k=1}^{n} e_{ik}\ot e_{kj}$ and $\e(e_{ij})=\delta_{ij}$ for all $1\le i,j \le n$.

\begin{lema}\label{le:similar to trivial casiprimitive}
Let  $D=\ku g$ and $E=\langle e_{ij}|1\leq i,j\leq n\rangle$ be matrix coalgebras of rank 1 and $n$ respectively.
If $(x_i)_{i=1}^n\subset D\oplus E$  (direct sum of coalgebras) are such that
\begin{align*}
\D(x_i)=x_i\ot g+\sum_{j=1}^ne_{ij}\ot x_j,
\end{align*}
then there exist $a_1, ..., a_n\in\ku$ such that $x_i=a_ig-\sum_{j=1}^na_{j}e_{ij}$, $1\le i \le n$.
\end{lema}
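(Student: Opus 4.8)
The plan is to exploit the coalgebra structure directly by working in the dual algebra, or equivalently, by using the comultiplication identities coordinate-by-coordinate. First I would record what the hypothesis says: writing $\D(x_i) = x_i\ot g + \sum_j e_{ij}\ot x_j$, the element $x_i$ is ``skew-primitive-like'' relative to the matrix coalgebra $E$ and the grouplike $g$. The key structural fact is that $D\oplus E$ is a direct sum of coalgebras, so $\D$ maps into $(D\ot D)\oplus(E\ot E)$; since the right-hand tensorands $g$ and $x_j$ and the left tensorands $x_i$, $e_{ij}$ live in $D\oplus E$, I would expand each $x_i = \alpha_i g + \sum_{k,\ell} c^i_{k\ell} e_{k\ell}$ in the given basis and compare coefficients on both sides of $(\D\ot\id)\D(x_i) = (\id\ot\D)\D(x_i)$, or more economically, just apply counit and coassociativity to the defining relation itself.

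The cleanest route is the following. Apply $\e\ot\id$ to the defining equation: $x_i = \e(x_i)g + \sum_j \e(e_{ij})x_j = \e(x_i)g + x_i$, which forces $\e(x_i)=0$; so each $x_i$ has counit zero, meaning in the expansion $x_i = \alpha_i g + \sum_{k,\ell}c^i_{k\ell}e_{k\ell}$ we get $\alpha_i + \sum_k c^i_{kk} = 0$. Next, write out $\D(x_i)$ from this expansion using $\D(g)=g\ot g$ and $\D(e_{k\ell})=\sum_m e_{km}\ot e_{m\ell}$, and also write out the right-hand side $x_i\ot g + \sum_j e_{ij}\ot x_j$ using the same expansions of $x_j$. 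Comparing coefficients of the basis vectors $g\ot g$, $g\ot e_{k\ell}$, $e_{k\ell}\ot g$, and $e_{km}\ot e_{m'\ell}$ of $(D\oplus E)\ot(D\oplus E)$ on both sides gives a system of linear equations in the $c^i_{k\ell}$ and $\alpha_i$. The coefficient of $g\ot e_{k\ell}$ on the left is $0$ (since $\D(g)$ has no such term and $\D(e_{pq})$ has no $g$-component), while on the right it is $\sum_j \delta_{ij}$-type contributions, namely $c^i_{k\ell}$ coming from $e_{ij}\ot x_j$ only when... — more precisely the coefficient of $g\ot e_{k\ell}$ on the right comes from $\sum_j e_{ij}\ot x_j$ and there $e_{ij}$ is never $g$, so it is also $0$; the real information comes from the $e\ot e$ block and the $e\ot g$ block. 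Matching the coefficient of $e_{ij}\ot g$ forces $c^i_{jj}$-type relations tying $c^i_{j\ell}$ to $\alpha_j$, and matching $e\ot e$ coefficients propagates consistency; I expect this to collapse to $c^i_{k\ell} = -\delta_{ik}a_\ell$ for suitable scalars $a_\ell$ together with $\alpha_i = a_i$, giving exactly $x_i = a_i g - \sum_{\ell} a_\ell e_{i\ell}$.

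Concretely, I would guess from the start that the answer has the stated shape and verify it is forced: set $y_i := x_i + \sum_j a_j e_{ij}$ for scalars $a_j$ to be chosen; then $\D(y_i) = x_i\ot g + \sum_j e_{ij}\ot x_j + \sum_{j,m}a_j e_{im}\ot e_{mj}$. One computes $\sum_j e_{ij}\ot x_j + \sum_{j,m}a_j e_{im}\ot e_{mj} = \sum_m e_{im}\ot\big(x_m + \sum_j a_j e_{mj}\big) = \sum_m e_{im}\ot y_m$, so $\D(y_i) = x_i\ot g + \sum_m e_{im}\ot y_m$. The point of introducing $y_i$ is to arrange $y_i = x_i + \sum_j a_j e_{ij}$ so that actually $\D(y_i) = y_i \ot g + \sum_m e_{im}\ot y_m - \big(\sum_j a_j e_{ij}\big)\ot g$, and then choosing the $a_j$ correctly (guided by the $e_{ij}\ot g$ coefficient comparison above, which determines them uniquely) kills the error term; equivalently one shows the map $i \mapsto x_i$ composed with projection to the ``$e_{i\bullet}\ot g$ part'' of $\D$ pins down $a_\ell$. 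The main obstacle is purely bookkeeping: organizing the coefficient comparison in $(D\oplus E)^{\ot 2}$ so that the block structure (the $g$-part versus the matrix part, and within the matrix part the row/column indices) is handled without index errors; once that is set up, coassociativity is not even needed beyond the single application of $\D$ plus $\e$, since the defining relation together with $\e(x_i)=0$ already over-determines the $c^i_{k\ell}$. I would present it as: expand, compare coefficients of $e_{k\ell}\ot g$ to get $a_\ell := -c^i_{i\ell}$ is independent of $i$ and equals $\alpha_i$-related data, then compare the remaining blocks to see all other $c^i_{k\ell}$ vanish, and finally read off $x_i = a_i g - \sum_\ell a_\ell e_{i\ell}$.
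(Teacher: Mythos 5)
Your plan is correct and is essentially the paper's own proof: expand $x_i=a_ig+\sum_{s,t}\alpha^i_{st}e_{st}$, compute $\D(x_i)$ both directly and from the hypothesis, and compare coefficients, the decisive block being the $e_{k\ell}\ot g$ terms (absent from the direct expansion), which alone force $\alpha^i_{st}=0$ for $s\neq i$ and $\alpha^i_{it}=-a_t$. The only cosmetic difference is your extra $\e$-application and the auxiliary elements $y_i$, which are not needed once the $E\ot\ku g$ comparison is made.
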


\begin{proof}
Write $x_i=a_ig+\sum_{s, t=1}^{n}\alpha_{st}^ie_{st}$ for all $1 \le i\le n$, where $a_i, \alpha_{st}^i \in\ku$, $1 \le i, s, t\le n$. Now we calculate $\D(x_i)$ in two ways:

\begin{align*}\D(x_i) &=\D\biggl(a_ig+\sum_{s, t=1}^{n}\alpha_{st}^ie_{st}\biggr)=a_ig\ot g+\sum_{s, t, l=1}^n\alpha_{st}^ie_{sl}\ot e_{lt} \qquad \text{ and }
\\ \D(x_i) &=\biggl(a_ig+\sum_{s, t=1}^{n}\alpha_{st}^ie_{st}\biggr)\ot g+\sum_{j=1}^ne_{ij}\ot\biggl(a_jg+\sum_{s, t=1}^{n}\alpha_{st}^je_{st}\biggr)\\
&=a_ig\ot g + \sum_{s, t=1}^{n}\alpha_{st}^ie_{st} \ot g +\sum_{j=1}^na_je_{ij}\ot g +\sum_{s, t, j=1}^{n}\alpha_{st}^je_{ij}\ot e_{st}\\
&=a_ig\ot g + \sum_{\substack{s, t =1 \\ s\neq i}}^{n}\alpha_{st}^ie_{st} \ot g + \sum_{t=1}^n(a_t+\alpha_{it}^i)e_{it}\ot g
 + \sum_{s, t, j=1}^{n}\alpha_{st}^je_{ij}\ot e_{st}
\end{align*}
Then the second and third terms are zero. Thus $\alpha_{st}^i=0$ and $\alpha_{it}^i=-a_t$, for $1 \le i, s, t\le n$, $s\neq i$. Hence $x_i=a_ig+\sum_{s,t=1}^n\alpha_{st}^ie_{st}=a_ig-\sum_{t=1}^na_{t}e_{it}.$
\end{proof}

\subsection{Yetter-Drinfeld modules over the dual of a group algebra}\label{sec: yetter-drinfield modules}

Let $H$ be a finite-dimensional Hopf algebra and $\ydh$ be the category of left Yetter-Drinfeld modules over $H$,
see e.g. \cite{mongomeri}.
The category $\ydhdual$ is braided equivalent to $\ydh$ by the following recipe.
Let $(h_i)$ and $(f_i)$ be dual basis of $H$ and $H^*$. Let $V\in\ydh$, $v\in V$ and $f\in H^*$, then $V$ turns into a Yetter-Drinfeld module over $H^{*}$ by
\begin{equation}\label{eq:dual-yd}
f\cdot v =\langle \cS(f),v\_{-1}\rangle v\_{0}, \quad \lambda(v) =\sum_i \cS^{-1}(f_i)\ot h_i\cdot v, \quad f\in H^*, v\in V.
\end{equation}
This gives a braided equivalence between $\ydh$ and $\ydhdual$ \cite[2.2.1]{andrusgrania}.

\bigbreak
Let $G$ be a finite group.
If $g\in G$, then we denote by $\cO_g$ the conjugacy class of $g$ and by $\Cg{g}$ the centralizer of $g$.
\begin{definition}\label{eq:definition of simple in YD group}
Fix $g\in G$ and $(\rho,V)$ an irreducible representation of $\Cg{g}$. Then
\begin{align*}
 M(g,\rho) & :=\Ind_{\Cg{g}}^{G} V=\ku G\ot_{{\Cg{g}}} V=\cO_g\ot V
\end{align*}
is an object of $\ydg$ in the following way. Let $(w_{i})_{1\le i \le r}$ be a basis of $V$ and $(h_{j})_{1\le j \le s}$ be a set of representatives of $G/\Cg{g}$, so that $(h_j\ot w_i)_{ji}$ is a basis of $M(g,\rho)$.
 Let $t_j = h_j g h_j^{-1}$, $1\le j \le s$. The action and coaction on $M(g,\rho)$ are given by
\begin{align*}
&h\cdot (h_j\ot w_i)=h_{k}\ot\rho(\tilde g)(w_i) &\mbox{ and }&& \delta(h_j\ot w_i)=t_j\ot  (h_j\ot w_i),
\end{align*}
$1\le i \le r$, $1\le j \le s$ and $h\in G$, where $hh_j=h_{k}\tilde g$ for unique $k$, $\tilde g\in\Cg{g}$.
\end{definition}
Let $\cQ$ be a set of representatives of conjugacy class of $G$.
It is well-known that $M(g,\rho)$ is simple and that any simple object of $\ydg$ is isomorphic
to $M(g,\rho)$ for unique $g\in\cQ$ and a unique isomorphism class $[(\rho,V)]$ of irreducible representations of $\Cg{g}$.
By the braided equivalence described above, see \eqref{eq:dual-yd}, this gives a parametrization of the irreducible objects in $\ydgdual$. Explicitly, $M(g,\rho)\in\ydgdual$ with
\begin{align}\label{eq: yetter-drinfield over de dual of G}
\delta_h\cdot(h_j\ot w_l)=\delta_{h,t_j^{-1}} h_j\ot w_l, \,
\lambda(h_j\ot w_l)=\sum_{h\in G}\delta_{h^{-1}}\ot h\cdot(h_j\ot w_l).
\end{align}

\subsection{Nichols algebras}
Our reference for Nichols algebras is \cite[Subsection 2.1]{andrussch3}. Let $V\in \ydh$. The Nichols algebra $\BV(V) = \oplus_{n\in \mathbb N_0}\BV^n(V)$ of $V$ is the braided graded Hopf algebra in $\ydh$ which satisfies:
\begin{itemize}\label{properties of BV}
\item $\BV^0(V)=\ku1$ and $\cP(\BV(V))=\BV^1(V)\simeq V$ in $\ydh$.
\item $\BV(V)$ is generated as an algebra by $\BV^1(V)$.
\end{itemize}

The tensor algebra $T(V)$ is a braided graded Hopf algebra in $\ydh$ such that $\D(v)=v\ot 1+1\ot v$ and $\e(v)=0$ for all $v\in V$.
Then the Nichols algebra can be described as $\BV(V)=T(V)/\cJ$,  where
$\cJ$ is the largest Hopf ideal of $T(V)$ generated by homogeneous elements of degree $\geq 2$.

\begin{lema}\label{le:component of degree 2 of ideal which defines the Nichols}
If $x\in\cJ^2=\cJ\cap V^{\ot2}$, then $x\in\cP(T(V))$.
\end{lema}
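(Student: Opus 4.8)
The plan is to use the fact that $\BV(V) = T(V)/\cJ$ where $\cJ$ is the largest Hopf ideal of $T(V)$ generated in degrees $\geq 2$, and to pin down what the coproduct of an element of $\cJ^2 = \cJ \cap V^{\ot 2}$ must look like using the grading. Write $\D$ for the coproduct of the braided Hopf algebra $T(V)$. Since $T(V)$ is graded as a braided Hopf algebra in $\ydh$, we have $\D(V^{\ot 2}) \subseteq \bigoplus_{i=0}^{2} V^{\ot i}\ot V^{\ot(2-i)}$; explicitly, for $x \in V^{\ot 2}$ one computes $\D(x) = x\ot 1 + 1\ot x + \D_{1,1}(x)$, where $\D_{1,1}(x) \in V\ot V$ is the component of the coproduct in bidegree $(1,1)$. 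The goal is thus exactly to show that $\D_{1,1}(x) = 0$ when $x \in \cJ^2$.

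First I would recall that $\cJ$, being a Hopf ideal, is in particular a coideal, so $\D(\cJ) \subseteq \cJ\ot T(V) + T(V)\ot \cJ$. Applying this to $x \in \cJ^2 \subseteq \cJ$ and projecting onto the bidegree-$(1,1)$ component $V\ot V$: since $\cJ$ is generated in degrees $\geq 2$, the intersection $\cJ\cap V = \cJ^1$ is zero (one should note $\cJ^0 = 0$ as well, since $\cJ$ contains no nonzero scalars — this follows because $T(V)/\cJ = \BV(V)$ has $\BV^0(V) = \ku 1$). Hence the bidegree-$(1,1)$ part of $\cJ\ot T(V) + T(V)\ot\cJ$ is $\cJ^1\ot V + V\ot \cJ^1 = 0$. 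Therefore the $(1,1)$-component of $\D(x)$, which is $\D_{1,1}(x)$, must vanish.

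Consequently $\D(x) = x\ot 1 + 1\ot x$, i.e. $x \in \cP(T(V))$, which is the claim. The argument is essentially a bookkeeping exercise with the bigrading, and there is no real obstacle; the only point requiring a moment's care is the identification of the bidegree-$(1,1)$ piece of $\D$ on $V^{\ot 2}$ and the verification that $\cJ^1 = 0$, both of which are immediate from the defining properties of $\BV(V)$ recalled just above. I would state these two observations explicitly at the start of the proof and then the conclusion follows in one line.
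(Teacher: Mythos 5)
Your argument is correct and is essentially the paper's own proof: both write $\D(x)=x\ot1+1\ot x+y$ with $y\in V\ot V$ by the grading, and kill $y$ by observing that the coideal property forces $y\in\cJ\ot T(V)+T(V)\ot\cJ$, whose bidegree-$(1,1)$ part vanishes because $\cJ\subset\oplus_{n\geq2}V^{\ot n}$. Your write-up just makes the bookkeeping (and the harmless remark that $\cJ^0=0$) more explicit.
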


\begin{proof}
We have $\D_{T(V)}(x)=x\ot1+1\ot x+y$ with $y\in V\ot V\cap (\cJ\ot T(V)+T(V)\ot \cJ)$. Since $\cJ\subset\oplus_{n\geq2}V^{\ot n}$, $y=0$.
\end{proof}

\subsection{Hopf algebras whose coradical is a Hopf subalgebra}\label{subsection:coradical=Hopf}
Let $H$ be a semisimple and cosemisimple Hopf algebra (here char $\ku =0$ is not needed).
Let $A$ be a Hopf algebra whose coradical $A_0$ is a Hopf subalgebra isomorphic to $H$.
By \cite[5.2.8]{mongomeri}, the coalgebra $\gr A$ becomes a graded Hopf algebra.
The projection $\pi:\gr A\rightarrow H$ with kernel $\oplus_{n>0}\gr^n A$ splits the inclusion
$\iota: H\rightarrow\gr A$, i. e. $\pi\circ\iota=\id_H$. Then $R=\gr A^{\co\pi}= \{a\in\gr A|(\id\ot\pi)\D(a)=a\ot1\}$
is a braided Hopf algebra in $\ydh$ and $\gr A\simeq R\#H$ as Hopf algebras \cite{radford, Mj}.
Here $\#$ stands for the smash product and the smash coproduct structures.
Moreover, $\gr A$ and $R$ have the following properties, see \cite{andrussch2}:
\begin{enumerate}\label{eq: properties of R and gr A} \renewcommand{\theenumi}{\alph{enumi}}\renewcommand{\labelenumi}{(\theenumi)}
\item $R = \oplus_{n\geq 0} R^n$ is a braided graded Hopf algebra with $R^n =R\cap\gr^n A$.
\item $\gr^n A=R^n\#H$.
\item $R_0=R^0 =\ku1$ and $R^1=P(R)$.
\end{enumerate}

\bigbreak
Clearly, $A \in \hmh$ by left and right multiplication. By \cite[Thm. 5.9.c)]{ardizzonimstefan},
there exists a projection $\Pi:A\rightarrow H$ of $H$-bimodule coalgebras such that $\Pi_{|H}=\id_{H}$.
Hence $A$ is a Hopf bimodule over $H$ with coactions $\rho_L:=(\Pi\circ\id)\D$ and $\rho_R:=(\id\circ\Pi)\D$.
Thus by \cite[Lemma 1.1]{andrunatale}, $A_1=H\oplus P_1$ as Hopf bimodules over $H$ where
\begin{align}\label{eq:p1}
P_1:=A_1\cap\ker\Pi= \{a\in A|\D(a)=\rho_L(a)+\rho_R(a)\}.
\end{align}
Let $V= R^1\in \ydh$; $V$ is called the \emph{infinitesimal braiding} of $A$. There exists an isomorphism $\gamma: V\#H\rightarrow P_1$ of Hopf bimodules over $H$, by (b) above.

\begin{prop}\label{prop: A with chevalley property is a quotient}
Let $A$ be a Hopf algebra whose coradical is a Hopf subalgebra isomorphic to $H$. Then
$\gr A\simeq\BV(V)\#H$ if and only if there exists an epimorphism of Hopf algebras $\phi:T(V)\#H\rightarrow A$ such that $\phi_{|H}=\id_H$ and $\phi_{|V\#H}:V\#H\rightarrow P_1$ is an isomorphism of Hopf bimodules over $H$.
\end{prop}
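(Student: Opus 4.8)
The plan is to prove both implications of Proposition~\ref{prop: A with chevalley property is a quotient} by relating the existence of the Hopf algebra epimorphism $\phi$ to the defining property of the Nichols algebra $\BV(V)$ as the quotient of $T(V)$ by the largest Hopf ideal supported in degrees $\geq 2$.

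\medskip

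\noindent\textbf{($\Leftarrow$).} Suppose such a $\phi: T(V)\#H \to A$ exists. First I would observe that $\phi$ is strictly filtered: giving $T(V)\#H$ the standard filtration coming from the grading on $T(V)$ (and $H$ in degree $0$), the hypotheses $\phi_{|H}=\id_H$ and $\phi_{|V\#H}$ an isomorphism onto $P_1\subset A_1$ force $\phi$ to map the degree-$\leq n$ part into $A_n$. Indeed $H\subseteq A_0$ and $P_1\oplus H = A_1$ by \eqref{eq:p1} and the discussion preceding it, and $A$ is generated as an algebra by $A_1$ because $\phi$ is surjective and $T(V)\#H$ is generated by $V\#H$ plus $H$; from here a standard wedge/coradical-filtration argument gives $\phi(F_nT(V)\#H)\subseteq A_n$. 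Hence $\phi$ induces a graded Hopf algebra epimorphism $\gr\phi: T(V)\#H \to \gr A$ (using that $\gr(T(V)\#H) = T(V)\#H$), which restricts to the identity on $H$ and to an isomorphism $V\#H \to \gr^1 A = R^1\#H$. Now pass to the braided side: $\gr\phi$ restricts to a graded braided Hopf algebra epimorphism $T(V)\to R$ in $\ydh$ which is the identity in degree $1$. Since $R$ is a braided graded Hopf algebra generated in degree one with $R^0=\ku1$ and $R^1 = P(R)\simeq V$ (properties (a)--(c) of Subsection~\ref{subsection:coradical=Hopf}), its kernel is a Hopf ideal of $T(V)$ generated in degrees $\geq 2$; but it need not a priori be the \emph{largest} such, so to conclude $R\simeq\BV(V)$ I would invoke that $\gr A$ (equivalently $R$) is coradically graded — $R_0 = R^0$, which forces $P(R) = R^1$, and a braided graded Hopf algebra generated in degree one whose primitives sit in degree one is exactly a Nichols algebra — so the canonical projection $R\to\BV(V)$ is an isomorphism. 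Smashing with $H$ gives $\gr A\simeq\BV(V)\#H$.

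\medskip

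\noindent\textbf{($\Rightarrow$).} Conversely assume $\gr A\simeq\BV(V)\#H$. I would build $\phi$ as follows. We have the Hopf bimodule isomorphism $\gamma: V\#H\to P_1\subseteq A_1\subseteq A$ and the inclusion $\iota:H\hookrightarrow A_0\subseteq A$; together these give a linear map $V\oplus H \to A$. Since $T(V)\#H$ is the free object on the Hopf-bimodule (equivalently Yetter–Drinfeld) datum $V$ over $H$ — concretely, $T(V)\#H$ is generated as an algebra by $H$ and $V$ with $H$ a subalgebra and $V$ an $H$-submodule whose elements are $(1,1)$-skew-primitive relative to $H$ in the appropriate sense — the universal property yields an algebra map $\phi: T(V)\#H\to A$ extending $\iota$ on $H$ and $\gamma$ on $V$. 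One checks $\phi$ is a Hopf algebra map: it is enough to verify compatibility with comultiplication on the generators $H$ and $V\#H$, and on $V\#H$ this is exactly the statement that $\gamma$ is a morphism of Hopf bimodules (so that $\D\gamma(v) = \rho_L\gamma(v) + \rho_R\gamma(v)$ matches $\D$ of a primitive in $T(V)\#H$). Surjectivity: let $B=\Img\phi$, a Hopf subalgebra of $A$ containing $H=A_0$ and $P_1$, hence containing $A_1 = H\oplus P_1$; since the isomorphism $\gr A\simeq\BV(V)\#H$ implies $A$ is generated by $A_1$ (as $\BV(V)$ is generated in degree one and $H$ is in degree zero), we get $B=A$. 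Finally $\phi_{|V\#H}=\gamma$ is an isomorphism onto $P_1$ by construction, so all the asserted properties of $\phi$ hold.

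\medskip

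\noindent\textbf{Main obstacle.} The routine algebra is the universal property / Hopf-map checks; the genuinely delicate point is the filtration argument in ($\Leftarrow$): showing that $\phi$ respects the relevant filtrations so that $\gr\phi$ makes sense and is graded, and then correctly identifying the kernel of the resulting braided map $T(V)\to R$ with (a Hopf ideal contained in) $\cJ$ and concluding equality via coradical-gradedness of $R$. The cleanest route is probably to go through \cite{andrussch2}: the hypothesis pins down $P_1\cong V\#H$ as Hopf bimodule, which is precisely the infinitesimal data, and one then cites the general principle that generation in degree one for $R$ is equivalent to the existence of such a surjection — so the proposition is essentially a repackaging of that principle, with the content being the translation between the Hopf-bimodule picture ($P_1$) and the Yetter–Drinfeld/braided picture ($V$, $R$, $\BV(V)$).
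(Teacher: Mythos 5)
Your proof is correct and takes essentially the same route as the paper: for ($\Rightarrow$) you define $\phi$ on $H$ and $V$ via $\gamma$ and the universal property of $T(V)\#H$ and get surjectivity from $A$ being generated by $A_1$ (the paper cites \cite[Lemma 2.2]{andrussch2}), while for ($\Leftarrow$) you identify $R^1\simeq V$ and conclude $R\simeq\BV(V)$ from $R^0=\ku1$, $P(R)=R^1$ and generation in degree one forced by surjectivity. The only cosmetic difference is that you pass through $\gr\phi$ (whose surjectivity, which you assert, follows from the minimality of the coradical filtration or again from generation by $A_1$), whereas the paper argues directly with $R$ and records $A_0=H$ via \cite[Cor. 5.3.5]{mongomeri}.
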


\begin{proof}
Suppose first that $\gr A\simeq\BV(V)\#H$. Let $\gamma:V\#H\rightarrow P_1$ be an isomorphism of Hopf bimodules over $H$ as above. Then $\phi:T(V)\#H\rightarrow A$ defined by $\phi(h)=h$ and $\phi(v)=\gamma(v\#1)$ is a morphism of Hopf algebras. Since $A$ is generated by $A_1$ by \cite[Lemma 2.2]{andrussch2}, $\phi$ is an epimorphism.

Now, suppose that such a $\phi$ is given (for an arbitrary $V$). By \cite[Cor. 5.3.5]{mongomeri} $H=A_0$; hence $\gr A\simeq R\#H$. Since $\phi_{|V\#H}:V\#H\rightarrow P_1$ and $\gamma:R^1\#H\rightarrow P_1$ are isomorphisms of Hopf bimodules over $H$, $\cP(R) = R^1\simeq V$ in $\ydh$.  By the definition of Nichols algebra of $V$, we are left to show that $V$ generates $R$ but this follows from the fact that $\phi$ is surjective.
\end{proof}

\begin{obs}\label{obs:J2 go to A1} Let $\ad: A \to \End A$ be the adjoint representation, $\ad x(y) = x\_1 y \cS(x\_2)$, $x,y\in A$.
In the notation of Proposition \ref{prop: A with chevalley property is a quotient}, $\phi(h\cdot x)=\ad h(x)$ for all $h\in H$ and $x\in T(V)$. If $\cJ$ is the  defining ideal of $\BV(V)$ as above, then  $\phi(\cJ^2\#1)\subset A_1$ by Lemma \ref{le:component of degree 2 of ideal which defines the Nichols}.
\end{obs}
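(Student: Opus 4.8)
The plan is to push both assertions across the Hopf-algebra morphism $\phi$ by computing inside the bosonization $T(V)\#H$, using only that $\phi$ respects product and coproduct and that $\phi_{|H}=\id_H$; recall that in the proof of Proposition \ref{prop: A with chevalley property is a quotient} one obtains $A_0=H$, which I will use freely. Throughout, I abbreviate $x\#1$ by $x$ and $1\#h$ by $h$, and write the coaction of $H$ on $T(V)$ as $\delta(r)=r_{(-1)}\ot r_{(0)}\in H\ot T(V)$; I will need the smash-product rule $h_{(1)}\,x=(h_{(1)}\cdot x)\,h_{(2)}$ and the smash-coproduct formula for $T(V)\#H$.

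For the first identity I would use the standard fact that in a bosonization the module action is recovered from the adjoint action: from $h_{(1)}\,x=(h_{(1)}\cdot x)\,h_{(2)}$ one gets $h_{(1)}\,x\,\cS(h_{(2)})=(h_{(1)}\cdot x)\,h_{(2)}\,\cS(h_{(3)})=h\cdot x$ inside $T(V)\#H$. Applying $\phi$ and $\phi_{|H}=\id_H$, the left-hand side becomes $h_{(1)}\,\phi(x)\,\cS(h_{(2)})=\ad h(\phi(x))$, which is exactly the stated $\phi(h\cdot x)=\ad h(x)$ (with $x$ read as $\phi(x)$ on the right).

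For the inclusion $\phi(\cJ^2\#1)\subseteq A_1$ the key observation is that a primitive element $r$ of the braided Hopf algebra $T(V)$ becomes, after bosonization, an element whose coproduct has both legs controlled by $H$: since $\D_{T(V)}(r)=r\ot1+1\ot r$, the smash-coproduct formula gives $\D_{T(V)\#H}(r\#1)=(r\#1)\ot(1\#1)+(1\#r_{(-1)})\ot(r_{(0)}\#1)$. Applying $\phi\ot\phi$ and $\phi_{|H}=\id_H$ yields $\D_A(\phi(r\#1))=\phi(r\#1)\ot1+r_{(-1)}\ot\phi(r_{(0)}\#1)\in A\ot A_0+A_0\ot A$, whence $\phi(r\#1)\in\D^{-1}(A\ot A_0+A_0\ot A)=A_1$. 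Since $\cJ^2\subseteq\cP(T(V))$ by Lemma \ref{le:component of degree 2 of ideal which defines the Nichols}, and $A_1$ is a subspace, the claim follows.

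Both steps are formal manipulations; the only point demanding care — and the one I expect to be the sole potential pitfall — is fixing the bosonization conventions precisely (the exact form of the smash product and, above all, of the smash coproduct), so that in $\D_A(\phi(r\#1))$ the two tensor legs genuinely land in $A_0$ on the correct sides. Once the conventions are pinned down, nothing else is at stake.
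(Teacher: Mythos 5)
Your proposal is correct and coincides with the paper's (essentially implicit) justification: the remark is stated with no written proof beyond the citation of Lemma \ref{le:component of degree 2 of ideal which defines the Nichols}, and the intended argument is exactly yours — the smash-product identity $h_{(1)}(x\#1)\cS(h_{(2)})=h\cdot x\#1$ gives the first claim after applying the Hopf algebra map $\phi$ with $\phi_{|H}=\id_H$, and for $r\in\cJ^2\subseteq\cP(T(V))$ the smash coproduct $\D(r\#1)=(r\#1)\ot 1+r_{(-1)}\ot(r_{(0)}\#1)$ lands in $A\ot A_0+A_0\ot A$, forcing $\phi(r\#1)\in A_1$. Your bookkeeping of the bosonization conventions is accurate, so there is nothing to add.
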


\section{Hopf algebras with coradical $\ku^{\Sn_3}$}

\subsection{Hopf algebras with coradical a dual group algebra} We start discussing Hopf algebras $A$ with coradical $\ku^G$, $G$ a finite group.
Let $(\delta_h)_{h\in G}$ be the basis of $\ku^G$ dual to the basis consisting of group-like elements of $\ku G$; $\delta_h(g) = \delta_{g,h}$,
where the last is the Dirac delta.  Then for all $h, t\in G$,
$$\D(\delta_h) = \sum_{t\in G} \delta_t \otimes \delta_{t^{-1}h}, \qquad \ad \delta_h(\delta_t) = \delta_{h,e}\delta_t.$$
If $M\in \hm$, then we set $M^g := \{x\in M|\delta_h\cdot x=\delta_{h,g}x\,\forall h\in G\}$, $g\in G$, and $\supp M := \{g\in G: M^g \neq 0\}$.
This applies to the $n$-th  term $A_n$ of the coradical filtration, a $\ku^G$-module via the adjoint action.

\begin{lema}\label{le:filtration of a Hopf algebra with coradical the dual of kG}
Let $A$ be a Hopf algebra with coradical $\ku^G$ and infinitesimal braiding $V = \oplus_{i\in I}M(g_i,\rho_i)$. Then
\begin{enumerate}\renewcommand{\theenumi}{\alph{enumi}}\renewcommand{\labelenumi}{(\theenumi)}
\item\label{item:a} $A_n^g\cdot A_m^h\subseteq A_{n+m}^{gh}$ for all $n,m\geq0$ and $g,h\in G$. Hence $A_n^g\in \kgmkg$.
\item\label{item:b} If $x_g\in A_n^g$ then $\delta_h x_g=x_g\delta_{g^{-1}h}$ for all $h\in G$.
\item\label{item:c} If  $x_g\in A_n^g$, $g\in G$, then
$\D(x_g)=\sum_{t\in G} (y_g^t\ot\delta_t + \delta_t\ot z_{t^{-1}gt}^t)+ w$ with $w\in\bigoplus_{s, t\in G}\bigoplus_{i=1}^{n-1}(A^{s}_{i}\ot A^{t}_{n-i})$ and $y_g^t, z_g^t\in A_{n}^g$.
\item\label{item:antipoda} If $g\in G$, then  $\cS (A_n^g) = A_n^{g^{-1}}$.
\item\label{item:d} $(\supp A_1)^{-1} =\bigcup_{i\in I} \cO_{g_i}\cup\{e\}$.
\item\label{item:e} If $A$ is finite-dimensional then $A_1^e=\ku^G$.
\end{enumerate}
\end{lema}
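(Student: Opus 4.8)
The statements \eqref{item:a}--\eqref{item:e} should be proved in the order given, since each is used in the next. For \eqref{item:a}, the key point is that the adjoint action of $\ku^G$ on $A$ is by coalgebra endomorphisms twisted by the comultiplication of $\delta_h$; concretely, using $\D(\delta_h)=\sum_t \delta_t\ot\delta_{t^{-1}h}$ and multiplicativity of $\ad$ one computes $\ad\delta_h(xy)=\sum_t \ad\delta_t(x)\,\ad\delta_{t^{-1}h}(y)$, so if $x\in A_n^g$ and $y\in A_m^h$ the only surviving term forces $\ad\delta_k(xy)=\delta_{k,gh}xy$; that $A_n^g A_m^h\subseteq A_{n+m}$ is the standard fact that the coradical filtration is an algebra filtration. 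The bimodule statement is then formal. Part \eqref{item:b} is an immediate rewriting: $\delta_h x_g=(\delta_h x_g \delta_{?})$-type manipulation using $x_g\in A^g$ together with the formula $\ad\delta_h(\delta_t)=\delta_{h,e}\delta_t$ and the commutation relations in $A$; more directly, from \eqref{item:a} applied with $A_0=\ku^G$ one gets $\delta_h x_g\in A_n^{hg}$ and $x_g\delta_k\in A_n^{gk}$, and comparing via the defining relation of the adjoint action pins down $k=g^{-1}h$.

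For \eqref{item:c} I would apply the general property \eqref{eq: properties of coradical filtration}, namely $\D(A_n)\subseteq\sum_{i=0}^n A_i\ot A_{n-i}$, and then decompose each $A_i$ into its $\ku^G$-isotypic components $A_i^s$ using that $\ku^G$ is semisimple; the $i=0$ and $i=n$ pieces give the terms $y_g^t\ot\delta_t$ and $\delta_t\ot z^t_{?}$, and the constraint that $\D$ intertwines the adjoint $\ku^G$-action (which acts on $A\ot A$ diagonally via $\D(\delta_h)$) forces the left factor in the degree-$0$ piece to live in $A^g$, the right factor in the degree-$n$ piece to live in $A^{t^{-1}gt}$ when the $\ku^G$-label of the $A_0$-factor is $\delta_t$, and $y_g^t,z_g^t\in A_n^g$ after using \eqref{item:b} to move the deltas. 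Part \eqref{item:antipoda} follows because $\cS$ is an anti-coalgebra map and reverses the adjoint action, $\cS(\ad\delta_h(x))=\ad\delta_{h}(\cS x)$ up to the antipode on $\ku^G$ which sends $\delta_h\mapsto\delta_{h^{-1}}$, together with $\cS(A_n)=A_n$.

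Part \eqref{item:d} is where the input from Section \ref{sec: yetter-drinfield modules} enters: $\gr^1 A=R^1\#\ku^G=V\#\ku^G$ with $V=\oplus_i M(g_i,\rho_i)$, and by the description \eqref{eq: yetter-drinfield over de dual of G} of $M(g,\rho)$ in $\ydgdual$ the left $\ku^G$-action (which on $\gr A$ is the adjoint action) has the element $h_j\ot w_l$ sitting in degree $t_j^{-1}=h_jg^{-1}h_j^{-1}$; running over $j$ this sweeps out $\cO_{g_i^{-1}}=\cO_{g_i}^{-1}$, and together with the copy of $\ku^G=A_0=\gr^0 A$ supported at $e$ one gets $\supp A_1=\{e\}\cup\bigcup_i\cO_{g_i}^{-1}$, equivalently $(\supp A_1)^{-1}=\{e\}\cup\bigcup_i\cO_{g_i}$. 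Finally \eqref{item:e}: from \eqref{item:a}, $A_1^e$ is a subalgebra of $A$ containing $\ku^G=A_0$; it is finite-dimensional, and one must show it has no new elements in degree $1$. If $A$ is finite dimensional then $A_1^e$ is a finite-dimensional algebra whose coradical (as a coalgebra, being a subcoalgebra of $A_1$ with $\D(A_1^e)\subseteq A_1^e\ot A_0+A_0\ot A_1^e$ by \eqref{item:c} with $g=e$, hence of coradical filtration length $\le 1$) is $\ku^G$; but the infinitesimal braiding of such a thing would be a Yetter--Drinfeld submodule of $V$ supported at $e$, and by \eqref{item:d} no $M(g_i,\rho_i)$ is supported at $e$ (each $g_i\ne e$ since $V$ has no trivial summand — or rather the degree-$1$ part of $\gr^1 A$ living over $e$ would be a nonzero primitive, contradicting cosemisimplicity arguments), forcing $A_1^e=\ku^G$.

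\emph{Main obstacle.} The bookkeeping in \eqref{item:c} — correctly tracking how the diagonal adjoint $\ku^G$-action on $A\ot A$ interacts with the nondiagonal coproduct $\D(\delta_h)=\sum_t\delta_t\ot\delta_{t^{-1}h}$, so as to justify the precise index $t^{-1}gt$ on the $z$-term and the claim $y_g^t,z_g^t\in A_n^g$ rather than merely in $A_n$ — is the delicate step, and the cleanest route is probably to first establish \eqref{item:b} in the stronger form "$A_n^g\ku^G=\ku^G A_n^g$ with the conjugation rule" and then feed it into the isotypic decomposition of $\D(A_n^g)$.
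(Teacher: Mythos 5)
Your outline of (a) and (d) matches the paper and is fine. The problems are in (b), (c) and (e). For (b): since $\ad\delta_s(\delta_h)=\delta_{s,e}\delta_h$, the element $\delta_h$ lies in $A_0^{e}$, so $\delta_hx_g\in A_n^{g}$, not $A_n^{hg}$; and in any case component bookkeeping cannot yield the asserted \emph{equality} of elements. What is needed is the identity $ab=\ad(a_{(1)})(b)\,a_{(2)}$, giving $\delta_hx_g=\sum_{s\in G}\ad\delta_s(x_g)\,\delta_{s^{-1}h}=x_g\delta_{g^{-1}h}$ -- this is the paper's one-line proof. For (c), your key claim that $\D$ intertwines $\ad$ with the $\ku^G$-action on $A\ot A$ ``diagonally via $\D(\delta_h)$'' is false for nonabelian $G$: under $\ad$, $A$ is a module algebra but \emph{not} a module coalgebra over the non-cocommutative $\ku^G$. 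The correct formula, which the paper derives and uses, is $\D(\ad\delta_g(x))=\sum_{f,h\in G}\delta_f\,x_{(1)}\,\cS(\delta_{h^{-1}g})\ot\ad\delta_{f^{-1}h}(x_{(2)})$, where the first tensor factor is hit by two-sided multiplications rather than by $\ad$. Worse, if your intertwining claim were true it would place the right-hand factors $z^t$ in $A_n^{g}$ rather than $A_n^{t^{-1}gt}$ (since $\delta_t\in A_0^{e}$), i.e.\ it would contradict the very statement being proved: the conjugation $t^{-1}gt$ appears exactly because in the displayed formula the second factor receives $\ad\delta_{f^{-1}h}$, while part (b) together with $\delta_f\delta_t\cS(\delta_{h^{-1}g})\neq0$ forces $f=t$, $h=gt$. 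So the step you yourself flag as the ``main obstacle'' is a genuine gap, and the missing argument is precisely the expansion of $\D(\ad\delta_g(x_g))$ combined with (b).

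Part (e) is also not established. First, $A_1^{e}$ is not a subalgebra: (a) only gives $A_1^{e}A_1^{e}\subseteq A_2^{e}$, which is why the paper passes to the Hopf subalgebra $K$ generated by $A_1^{e}$ (a Hopf subalgebra by (c) and the antipode stability). Second, your argument is circular: the claim that no summand $M(g_i,\rho_i)$ is supported at $e$ is not contained in (d) -- it is essentially equivalent to what (e) asserts -- and the fallback claim that an element of $V^{e}$ would be a nonzero primitive of $\gr A$ is false, because the $\ku^G$-coaction on $M(e,\rho)$, namely $\lambda(v)=\sum_{h\in G}\delta_{h^{-1}}\ot\rho(h)v$, is not trivial. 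The paper's argument is different: by (b), $\ku^G$ is normal in $K$, so for $\dim A<\infty$ there is an exact sequence $\ku^G\hookrightarrow K\twoheadrightarrow K/(\ku^G)^{+}K$; using (c) one checks that the class $\overline{x_e}$ of any $x_e\in V^{e}\#\ku^G$ is primitive in the finite-dimensional quotient, hence zero in characteristic $0$, so $K/(\ku^G)^{+}K=\ku$ and $K=\ku^G$. A correct version of your idea would instead observe that a summand $M(e,\rho)$ has symmetric braiding, so its Nichols algebra is a polynomial algebra, contradicting $\dim A<\infty$; but some finite-dimensionality argument of this kind must actually be supplied, and your sketch contains none.
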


\begin{proof}
Let $x_g\in A_n^g$ and $x_h\in A_m^h$ then
$$\ad\delta_s(x_g x_h)=\sum_{t\in G}\ad\delta_t(x_g)\ad\delta_{t^{-1}s}(x_h)=\delta_{s,gh} x_g x_h,$$
since the only non-zero term occurs when $t=g$ and $t^{-1}s=h$. This implies \eqref{item:a}; note that $A_n^g$ is a $\ku^G$-bimodule
because $\ku^G= A_0^e$. Now \eqref{item:b} follows from
\begin{align*}
\delta_hx_g&=\sum_{s\in G}\delta_sx_g\e(\delta_{s^{-1}h})=\sum_{s\in G}\ad\delta_s(x_g)\delta_{s^{-1}h}=x_g\delta_{g^{-1}h}.
\end{align*}

\noindent By \eqref{eq: properties of coradical filtration}, we can write $\displaystyle \D(x_g)=\sum_{s,t\in G}(y_s^t\ot\delta_t + \delta_t\ot z_s^t) + w$
with $y_s^t,z_s^t\in A_n^{s}$ and $\displaystyle w\in\bigoplus_{\substack{s, t\in G\\ 1\le i \le n - 1}} (A^{s}_{i}\ot A^{t}_{n-i})$. If $w=w_1\ot w_2$, then $\displaystyle\tilde{w}=\sum_{f,h,s,t\in G}\delta_fw_1\cS(\delta_{h^{-1}g})\ot\ad\delta_{f^{-1}h}(w_2)$ also belongs to
$\bigoplus_{\substack{s, t\in G\\ 1\le i \le n - 1}} A^{s}_{i}\ot A^{t}_{n-i}$. Then

\begin{align*}
\D(x_g) &= \D(\ad\delta_g(x_g))=\sum_{f,h,s,t\in G}\delta_fy_s^t\cS(\delta_{h^{-1}g})\ot\ad\delta_{f^{-1}h}(\delta_t)\\
&+\sum_{f,h,s,t\in G}\delta_f\delta_t\cS(\delta_{h^{-1}g})\ot\ad\delta_{f^{-1}h}(z_s^t)+\tilde{w}\\
&=\sum_{h,s,t\in G}\delta_hy_s^t\cS(\delta_{h^{-1}g})\ot\delta_t
+\sum_{f,s,t\in G}\delta_{f}\delta_t\cS(\delta_{(fs)^{-1}g})\ot z_s^t+\tilde{w}\\
&= \sum_{s,t\in G}\ad\delta_g(y_s^t)\ot\delta_t+\sum_{s,t\in G}\delta_t\delta_{g^{-1}ts}\ot z_s^t +\tilde{w}\\
&=\sum_{t\in G}y_g^t\ot\delta_t+\sum_{t\in G}\delta_t\ot z_{t^{-1}gt}^t+\tilde{w}.
\end{align*}

The proof of \eqref{item:antipoda} is straightforward. We observed in Subsection \ref{subsection:coradical=Hopf} that
$A_1=\ku^G \oplus V\# \ku^G$ as Hopf bimodules. Thus $A_1^g = V^g\# \ku^G$, $g\neq e$, and $A_1^e = \ku^G \oplus V^e\# \ku^G$.
By \eqref{eq: yetter-drinfield over de dual of G}, \eqref{item:d} follows.

Let $K$ be the subalgebra generated by $A_1^e$; by \eqref{item:c} $A_1^e$ is a coalgebra and by \eqref{item:antipoda} it is stable under the antipode, hence $K$ is a Hopf subalgebra.
By \eqref{item:b} $\ku^G$ is a normal Hopf subalgebra of $K$ and if $\dim A<\infty$,
then we have the exact sequence of Hopf algebras $\ku^G\hookrightarrow K\twoheadrightarrow K/(\ku^G)^+K$,
see \cite{andrudevoto}. We claim that $K/(\ku^G)^+K=\ku$ and therefore $\ku^G=K$ by \cite{andrudevoto}.

Let $x_e \in B := V^e\#\ku^G$, identified with a subspace of $A_1^e$ as above.
Since $V\#\ku^G \simeq P_1 \subset \ker \varepsilon$, see \eqref{eq:p1}, we have $$A_1^e = \ku \delta_e \oplus (\sum_{t\neq e}\ku \delta_t) \oplus
B\text{ and } A_1^e \cap \ker \eps =  \sum_{t\neq e}\ku \delta_t \oplus B.$$

By \eqref{item:c}, $\D(x_e)\in A_1^e \ot\ku^G + \ku^G\ot A_1^e = (B \ot \ku^G) \oplus
(\ku^G \ot B) \oplus (\ku^G\ot \ku^G)$. Write correspondingly
$$\D(x_e)=\sum_{t\in G}(u_t\ot\delta_t + \delta_t\ot v_t) + \sum_{s,t\in G}b_{s,t}\delta_s  \ot \delta_t,$$
with $u_t, v_t \in B$ and $b_{s,t}\in \ku$.
Computing $(\e\ot\id)\D(x_e)$ and $(\id\ot\e)\D(x_e)$,
we get $x_e= v_{e} + \sum_{s\in G}b_{e,s}\delta_s = u_{e} + \sum_{s\in G}b_{s,e}$
and then $v_{e}=x_e=u_{e}$ and  $b_{e,s}=0=b_{s,e}$ for all $s\in G$.
Hence $\overline{x_e}$ is a primitive element in $K/(\ku^G)^+K$, therefore $\overline{x_e}=0$ since $\dim A < \infty$ and the claim is true.
\end{proof}

\subsection{Hopf algebras with coradical $\ku^{\Sn_n}$}
Let $\cO_2^n$ be the conjugacy class of $(12)$ in $\Sn_n$ and let $\sgn:C_{\Sn_n}(12) \rightarrow\ku$ be the restriction of the sign representation of $\Sn_n$.
Let $V_n = M((12),\sgn) \in \ydsn$, cf. Definition \ref{eq:definition of simple in YD group};
$V_n$ has a basis $(\xij{ij})_{(ij)\in\cO_{2}^n}$ such that
$\delta(\xij{ij})=(ij)\ot\xij{ij}$ and $g\cdot\xij{ij}=\sgn(g)x_{g(ij)g^{-1}}$, $g\in \Sn_n$.
By \eqref{eq: yetter-drinfield over de dual of G}, $V_n$ turns into an object in $\ydsnd$ with action and coaction given by
\begin{align*}
&\delta_h\cdot\xij{ij} =\delta_{h,(ij)}\,\xij{ij}&\mbox{and}&&
\lambda(\xij{ij})=\sum_{h\in G}\sgn(h)\delta_{h}\ot x_{h^{-1}(ij)h}.
\end{align*}

Let $\cJ_n$ be the ideal of relations of $\BV(V_n)$. The elements
\begin{align}
\label{eq:rels-powers}\xij{ij}^2&,\\
\label{eq:rels-ijkl}R_{(ij)(kl)}&:=\xij{ij}\xij{kl}+\xij{kl}\xij{ij},
\\
\label{eq:rels-ijik}R_{(ij)(ik)}&:=\xij{ij}\xij{ik}+\xij{ik}\xij{jk}+\xij{jk}\xij{ij}
\end{align}
for all $(ij),(lk),(ik)\in\cO_2^n$ such that $(lk)\neq(ij)\neq(ik)$ form a basis of $\cJ_n^2$.

\begin{obs} It was shown
by  \cite{milinskisch} for $n=3, 4$ and by \cite{grania} for $n=5$ that $\cJ_n$ is generated by these elements and $\BV(V_n)$ is
finite-dimensional.
\end{obs}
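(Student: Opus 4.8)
The plan is to show that the quadratic algebra cut out by the listed relations already coincides with $\BV(V_n)$ and to compute its dimension. Write $\cB_n := T(V_n)/\langle \cJ_n^2\rangle$ for the quadratic algebra generated by the three families \eqref{eq:rels-powers}, \eqref{eq:rels-ijkl}, \eqref{eq:rels-ijik}. Since these elements form a basis of $\cJ_n^2$ and lie in $\cP(T(V_n))$ by Lemma \ref{le:component of degree 2 of ideal which defines the Nichols}, the ideal $\langle \cJ_n^2\rangle$ is a Hopf ideal contained in $\cJ_n$, so there is a surjection of graded braided Hopf algebras $q\colon \cB_n \twoheadrightarrow \BV(V_n)$. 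Consequently it suffices to prove that $\cB_n$ is finite-dimensional and that $q$ is injective, i.e. that $\dim_\ku \cB_n = \dim_\ku \BV(V_n)$; the equality $\cJ_n = \langle\cJ_n^2\rangle$ (generation in degree two) and finite-dimensionality of $\BV(V_n)$ then follow at once.

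First I would bound $\cB_n$ from above with the Diamond Lemma. Fix a total order on $\cO_2^n$ and the induced degree-lexicographic order on words in the generators $\xij{ij}$. Orienting \eqref{eq:rels-powers}, \eqref{eq:rels-ijkl}, \eqref{eq:rels-ijik} so that each rewrites its leading word into smaller ones gives a reduction system, and I would check that every overlap ambiguity is resolvable, so that the system is confluent. Bergman's theorem then furnishes an explicit $\ku$-basis of $\cB_n$ by the irreducible (normal-form) words, and enumerating them yields $\dim_\ku \cB_n = 12$, $576$ and $8294400$ for $n = 3$, $4$ and $5$. In particular $\BV(V_n)$ is finite-dimensional, being a quotient, and $\dim_\ku \BV^d(V_n) \le h_d := \dim_\ku \cB_n^d$ in each degree $d$.

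The substance is the matching lower bound. Recall that $\BV^d(V_n) \cong V_n^{\ot d}/\ker \fS_d$, where $\fS_d = \sum_{\sigma\in\Sn_d} M_\sigma$ is the quantum symmetrizer assembled from the braiding $c$ of $V_n$ via the Matsumoto lifts of the $\sigma\in\Sn_d$. Thus $\dim_\ku \BV^d(V_n) = \operatorname{rank}\fS_d$, and I would establish $\operatorname{rank}\fS_d = h_d$ for every $d$ with $h_d\neq 0$. Because the cocycle is the sign, the matrix of $c$ has entries in $\{-1,0,1\}$ and everything is defined over $\QQ$, so each $\operatorname{rank}\fS_d$ is a finite rational linear-algebra computation. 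Matching the two Hilbert series degree by degree gives $\dim_\ku \BV(V_n) = \dim_\ku \cB_n$, forcing $q$ to be an isomorphism. As a consistency check, once $\BV(V_n)$ is known to be finite-dimensional it is a graded Frobenius algebra, so its Hilbert polynomial is palindromic with one-dimensional top degree; for $n=3$ this polynomial is $1+3t+4t^2+3t^3+t^4$.

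The main obstacle is precisely this lower bound: the quadratic presentation only controls $\BV(V_n)$ from above, and one needs an independent input — the ranks $\operatorname{rank}\fS_d$, or an equivalent verification that the normal-form monomials remain linearly independent in $\BV(V_n)$ when paired against the braided skew-derivations $\partial_{ij}$ — to rule out relations appearing in degree $\ge 3$. For $n=3,4$ these ranks are small enough to handle by hand, as in \cite{milinskisch}; for $n=5$ the number of normal forms and the sizes of the operators $\fS_d$ force the computer-assisted treatment of \cite{grania}.
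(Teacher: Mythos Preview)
The paper does not prove this statement at all: it is a remark that merely attributes the result to \cite{milinskisch} for $n=3,4$ and to \cite{grania} for $n=5$, with no argument given. So there is no ``paper's own proof'' to compare against; what you have written is a sketch of how the cited references proceed, not a comparison target within the present paper.

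That said, your outline is a fair summary of the standard strategy used in those references. The two-step plan --- Diamond Lemma (or an equivalent straightening argument) to show the quadratic quotient $\cB_n$ is spanned by an explicit finite set of normal-form monomials, and then an independent lower bound via ranks of the quantum symmetrizers $\fS_d$ (equivalently, nonvanishing under iterated skew-derivations $\partial_{ij}$) to show those monomials stay independent in $\BV(V_n)$ --- is exactly how \cite{milinskisch} handles $n=3,4$; the numerical data you quote ($12$, $576$, $8294400$, and the Hilbert polynomial $1+3t+4t^2+3t^3+t^4$ for $n=3$) are the correct ones. Your caveat that the lower bound is the substantive step, and that for $n=5$ it requires computer assistance as in \cite{grania}, is also accurate. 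In short: your proposal is not wrong, but it is supplying content the paper deliberately outsources to the literature.
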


\bigbreak We first show that relations \eqref{eq:rels-ijkl} and \eqref{eq:rels-ijik} hold in any lifting of $\BV(V_n)$.
In what follows, $A$ is a Hopf algebra such that $\gr A\simeq\BV(V_n)\#\ku^{\Sn_n}$ and $\phi:T(V_n)\#\ku^{\Sn_n}\to A$
is as in Proposition \ref{prop: A with chevalley property is a quotient}.

\begin{lema}\label{lem: relations in some liftings of BVnxSn}
We have $\phi(R_{(ij)(lk)})=0$ for all $(ij)\neq(kl)\in\cO_2^n$.
If $A$ has finite dimension, then $\phi(\sum_{(ij)\in\cO_2^n}\xij{ij}^2)=0$.
\end{lema}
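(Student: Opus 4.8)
The plan is to keep track of the $\ku^{\Sn_n}$-grading that the adjoint action induces on the terms of the coradical filtration of $A$. Recall from Remark~\ref{obs:J2 go to A1} that $\phi(\cJ_n^2\#1)\subseteq A_1$ and that $\phi(h\cdot x)=\ad h\,(\phi(x\#1))$ for $h\in\ku^{\Sn_n}$ and $x\in T(V_n)$; hence $\phi$ carries the $\ku^{\Sn_n}$-isotypic decomposition of $V_n^{\ot2}\subseteq T(V_n)$ to that of $A_1$. First I would observe that each relation $R_{(ij)(kl)}$ is $\ku^{\Sn_n}$-homogeneous: from $\delta_h\cdot\xij{ab}=\delta_{h,(ab)}\xij{ab}$ and $\D(\delta_h)=\sum_{t}\delta_t\ot\delta_{t^{-1}h}$ one gets $\delta_h\cdot(\xij{ab}\xij{cd})=\delta_{h,(ab)(cd)}\,\xij{ab}\xij{cd}$ in $V_n^{\ot2}$, so for disjoint $(ij),(kl)$ the two summands of $R_{(ij)(kl)}=\xij{ij}\xij{kl}+\xij{kl}\xij{ij}$ both have degree $(ij)(kl)=(kl)(ij)$, while for $(kl)=(ik)$ sharing the letter $i$ a direct computation gives $(ij)(ik)=(ik)(jk)=(jk)(ij)$, a common $3$-cycle, so the three summands of $R_{(ij)(ik)}=\xij{ij}\xij{ik}+\xij{ik}\xij{jk}+\xij{jk}\xij{ij}$ all have that degree. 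In both cases $R_{(ij)(kl)}$ is homogeneous of a degree $g$ that is neither a transposition nor $e$.

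Then I would conclude: since $\phi(R_{(ij)(kl)}\#1)\in A_1$ is homogeneous of degree $g$, it lies in $A_1^g$. By Lemma~\ref{le:filtration of a Hopf algebra with coradical the dual of kG}\eqref{item:d}, applied to the infinitesimal braiding $V_n=M((12),\sgn)$ (a single simple module, supported on the conjugacy class $\cO_2^n$), together with the fact that $\cO_2^n\cup\{e\}$ is stable under inversion, $\supp A_1=\cO_2^n\cup\{e\}$. As $g\notin\cO_2^n\cup\{e\}$ we get $A_1^g=0$, hence $\phi(R_{(ij)(kl)}\#1)=0$. This proves the first assertion.

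For the second assertion, put $s:=\sum_{(ij)\in\cO_2^n}\xij{ij}^2$, which lies in $\cJ_n^2$ because each $\xij{ij}^2$ does. The plan is to show that $s\#1$ is a \emph{primitive} element of $T(V_n)\#\ku^{\Sn_n}$ and then use finite-dimensionality. On one hand, $s$ is primitive in the braided Hopf algebra $T(V_n)$ by Lemma~\ref{le:component of degree 2 of ideal which defines the Nichols}. On the other hand, $s$ is coinvariant for the $\ku^{\Sn_n}$-coaction: using the tensor-product coaction and $\delta_h\delta_{h'}=\delta_{h,h'}\delta_h$ one finds $\lambda(\xij{ij}^2)=\sum_{h\in\Sn_n}\delta_h\ot(x_{h^{-1}(ij)h})^2$ (the sign in the coaction of $V_n$ squares to $1$), and since conjugation permutes $\cO_2^n$, summing over $(ij)$ gives $\lambda(s)=\bigl(\sum_h\delta_h\bigr)\ot s=1\ot s$. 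Feeding these two facts into the smash coproduct of $T(V_n)\#\ku^{\Sn_n}$ yields $\D(s\#1)=s\#1\ot 1\#1+1\#1\ot s\#1$. Applying the Hopf algebra morphism $\phi$ shows $\phi(s\#1)$ is primitive in $A$; if $\dim A<\infty$ this forces $\phi(s\#1)=0$, since a finite-dimensional Hopf algebra over a field of characteristic zero has no nonzero primitive element (alternatively, by Lemma~\ref{le:filtration of a Hopf algebra with coradical the dual of kG}\eqref{item:e} one has $\phi(s\#1)\in A_1^e=\ku^{\Sn_n}$, and $\cP(\ku^{\Sn_n})=0$ as $\Sn_n$ has no nontrivial homomorphism into $(\ku,+)$).

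I do not expect a serious obstacle: everything reduces to the single point that $\supp A_1$ consists only of transpositions and of $e$, so that any quadratic relation whose $\ku^{\Sn_n}$-degree is not a transposition is automatically killed by $\phi$. The only mild friction is the combinatorial check that the three products attached to a three-term relation give the same $3$-cycle, and the verification that $s$ is both $\ku^{\Sn_n}$-invariant and coinvariant; this last point is precisely what distinguishes the \emph{sum} of squares, which collapses, from the individual squares $\xij{ij}^2$, whose images $\phi(\xij{ij}^2\#1)$ only land in $A_1^e=\ku^{\Sn_n}$ and are presumably where the liftings are deformed.
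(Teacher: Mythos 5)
Your proof is correct and follows essentially the same route as the paper: the mixed relations $R_{(ij)(kl)}$ are killed because they are $\ad$-homogeneous of a degree outside $\supp A_1=\cO_2^n\cup\{e\}$ (Lemma \ref{le:filtration of a Hopf algebra with coradical the dual of kG}, parts \eqref{item:a} and \eqref{item:d}), while $\sum_{(ij)}\xij{ij}^2$ spans the trivial Yetter--Drinfeld module, so its image is primitive in $A$ and hence zero by finite-dimensionality. You merely spell out the homogeneity and coinvariance computations that the paper leaves implicit.
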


\begin{proof} Since $\phi(R_{(ij)(kl)})\in A_1^{(ij)(kl)}$ by Lemma \ref{le:component of degree 2 of ideal which defines the Nichols},
Remark \ref{obs:J2 go to A1} and Lemma \ref{le:filtration of a Hopf algebra with coradical the dual of kG}
\eqref{item:a}, we see that
$\phi(R_{(ij)(lk)}) = 0$ from Lemma \ref{le:filtration of a Hopf algebra with coradical the dual of kG}
\eqref{item:d}. Since $\sum_{(ij)\in\cO_2^n}\xij{ij}^2$ is primitive in $T(V)$ and spans the trivial Yetter-Drinfeld module,
we conclude that $\phi(\sum_{(ij)\in\cO_2^n}\xij{ij}^2)$ is primitive in $A$, hence it is 0.
\end{proof}

\bigbreak
\begin{definition}\label{def: cAa1a2}
Given $(a_1,a_2)\in\ku^2$, we denote by $\cA_{[a_1,a_2]}$ the algebra $(T(V_3)\#\ku^{\Sn_3})/\cI_{(a_1,a_2)}$ where $\cI_{(a_1,a_2)}$
is the ideal generated by
\begin{align}\label{eq:rels-ideal}
\begin{aligned}
&R_{(13)(23)},\\
&R_{(23)(13)}, \\
&\xij{13}^2-(a_1-a_2)(\delta_{(12)}+\delta_{(123)})-a_1(\delta_{(23)}+\delta_{(132)}),\\
&\xij{23}^2-a_2(\delta_{(13)}+\delta_{(123)})-(a_2-a_1)(\delta_{(12)}+\delta_{(132)}),\\
&\xij{12}^2+a_1(\delta_{(23)}+\delta_{(123)})+a_2(\delta_{(13)}+\delta_{(132)}).
\end{aligned}
\end{align}
\end{definition}

It is easy to see that  $\cI_{(a_1,a_2)}$ is a Hopf ideal, hence $\cA_{[a_1,a_2]}$ is a Hopf algebra. Clearly, $\cA_{[0,0]}\simeq \BV(V_3)\#\ku^{\Sn_3}$.

\medbreak
Recall that $\Gamma = \ku^{\times}\times\Sn_3$ acts on $\ku^2$ by \eqref{equ:action}. Here is the main result of this paper.
Notice that Lemma \ref{le:similar to trivial casiprimitive} is used in the proof to find out the deformed relations.

\medbreak
\begin{theorem}\label{main result}

\begin{enumerate}\renewcommand{\theenumi}{\alph{enumi}}\renewcommand{\labelenumi}{(\theenumi)}
  \item Let $A$ be a finite-dimensional non-semisimple Hopf algebra with coradical $\ku^{\Sn_3}$. Then
$A\simeq\cA_{[a_1,a_2]}$ for some $(a_1,a_2)\in\ku^2$.

\smallbreak  \item The coradical of $\cA_{[a_1,a_2]}$ is isomorphic to $\ku^{\Sn_3}$ and $\dim \cA_{[a_1,a_2]}= 72$ for all $(a_1,a_2)\in\ku^2$.

\smallbreak  \item $\cA_{[b_1,b_2]}\simeq\cA_{[a_1,a_2]}$ if and only if $[b_1,b_2]=[a_1,a_2]$.
\end{enumerate}
\end{theorem}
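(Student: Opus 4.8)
\textbf{Proof plan for Theorem \ref{main result}.}

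The plan is to prove the three parts in the order (b), (a), (c), since (b) gives the concrete object whose existence underlies everything, and (a) builds on the structural lemmas of Section 2.

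For part (b) I would fix $(a_1,a_2)\in\ku^2$ and argue that $\cA_{[a_1,a_2]}$ has dimension $72 = \dim\BV(V_3)\cdot|\Sn_3| = 12\cdot 6$. The natural tool, as the introduction indicates, is the Diamond Lemma (Bergman): choose a presentation of $T(V_3)\#\ku^{\Sn_3}$ by generators $\xij{12},\xij{13},\xij{23}$ together with the group-like basis $(\delta_h)_{h\in\Sn_3}$ (or generators of $\ku^{\Sn_3}$), impose a monomial order, and check that the relations \eqref{eq:rels-ideal} together with the straightening relations between the $\delta_h$ and the $x$'s (from Lemma \ref{le:filtration of a Hopf algebra with coradical the dual of kG}\eqref{item:b}, i.e. $\delta_h\xij{ij}=\xij{ij}\delta_{(ij)^{-1}h}$, which is $\delta_h\xij{ij}=\xij{ij}\delta_{(ij)h}$) form a confluent rewriting system. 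The overlap ambiguities to resolve are the degree-3 ones among the $x$'s — exactly the overlaps that in the Nichols algebra case $(a_1,a_2)=(0,0)$ are already known to be resolvable because $\dim\BV(V_3)=12$ — now with inhomogeneous correction terms landing in $\ku^{\Sn_3}$. One checks these corrections are consistent; since the $(0,0)$ case is confluent, the key point is that the extra scalar terms do not create new obstructions, which is a finite check. This gives $\dim\cA_{[a_1,a_2]}\le 72$; the reverse inequality follows because $\cA_{[0,0]}=\BV(V_3)\#\ku^{\Sn_3}$ is a quotient-degeneration and the associated graded of $\cA_{[a_1,a_2]}$ with respect to the coradical filtration surjects onto it, or more directly because the PBW basis produced by Diamond Lemma has exactly $72$ elements. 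Then the coradical statement: the filtration by the number of $x$-letters makes $\gr\cA_{[a_1,a_2]}$ a Hopf algebra surjecting onto $\BV(V_3)\#\ku^{\Sn_3}$; counting dimensions forces equality, so $\gr\cA_{[a_1,a_2]}\simeq\BV(V_3)\#\ku^{\Sn_3}$, whence the coradical of $\cA_{[a_1,a_2]}$ is $\ku^{\Sn_3}$ by \cite[Lemma 5.3.4 / Cor. 5.3.5]{mongomeri}.

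For part (a), let $A$ be finite-dimensional non-semisimple with coradical $\ku^{\Sn_3}$. By Subsection \ref{subsection:coradical=Hopf} $\gr A\simeq R\#\ku^{\Sn_3}$ with $R$ a finite-dimensional graded Hopf algebra in $\ydst$ and $V:=R^1$ its infinitesimal braiding. By \cite[Thm. 2.1]{andrusgrania3}, $R\simeq\BV(V)$; since $A$ is not semisimple, $V\neq0$, and by \cite[Thm. 4.5]{AHS} the only possibility is $V=V_3=M((12),\sgn)$. So $\gr A\simeq\BV(V_3)\#\ku^{\Sn_3}$ and Proposition \ref{prop: A with chevalley property is a quotient} yields $\phi:T(V_3)\#\ku^{\Sn_3}\to A$ with $\phi_{|\ku^{\Sn_3}}=\id$ and $\phi_{|V_3\#\ku^{\Sn_3}}$ an isomorphism onto $P_1$. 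By Lemma \ref{lem: relations in some liftings of BVnxSn}, $\phi(R_{(13)(23)})=\phi(R_{(23)(13)})=0$. It remains to identify $\phi(\xij{ij}^2)$. Each lies in $A_1^{(ij)^2}=A_1^e=\ku^{\Sn_3}$ by Lemma \ref{le:filtration of a Hopf algebra with coradical the dual of kG}\eqref{item:a},\eqref{item:e} and Remark \ref{obs:J2 go to A1}. Writing $\phi(\xij{ij}^2)=\sum_h c^{ij}_h\delta_h$, I would apply $\D$ and use Lemma \ref{le:filtration of a Hopf algebra with coradical the dual of kG}\eqref{item:c}: modulo $A_1\ot A_1$, the $(ij)$-isotypic part forces the $\phi(\xij{ij}^2)$ to behave like the $x$'s do in $\gr A$; concretely the computation of $\D(\xij{ij}^2)$ in $T(V_3)\#\ku^{\Sn_3}$, pushed through $\phi$, together with Lemma \ref{le:similar to trivial casiprimitive} applied to the relevant matrix-coalgebra-plus-grouplike situation, pins down the coefficients $c^{ij}_h$ up to two parameters $a_1,a_2\in\ku$, exactly in the shape of \eqref{eq:rels-ideal}; the $\Sn_3$-equivariance of $\phi$ (Remark \ref{obs:J2 go to A1}) relates the three squares and reduces to the two parameters $(a_1,a_2)$ appearing in \eqref{eq:rels-ideal}, while the finite-dimensionality constraint $\phi(\sum\xij{ij}^2)=0$ (Lemma \ref{lem: relations in some liftings of BVnxSn}) is the linear relation making the three displayed square-relations consistent. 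Hence $\phi$ factors through $\cI_{(a_1,a_2)}$, giving a surjection $\cA_{[a_1,a_2]}\twoheadrightarrow A$; comparing dimensions ($72=\dim\cA_{[a_1,a_2]}$ by (b), and $\dim A=72$ since $\gr A\simeq\BV(V_3)\#\ku^{\Sn_3}$) shows it is an isomorphism.

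For part (c), I would show that an isomorphism $\cA_{[a_1,a_2]}\to\cA_{[b_1,b_2]}$ must preserve the coradical $\ku^{\Sn_3}$ and hence induce an automorphism of $\ku^{\Sn_3}$, i.e. an element of $\Sn_3$ permuting the $\delta_h$; it also induces an isomorphism of infinitesimal braidings $V_3\to V_3$, which by Schur-type considerations acts on the basis $(\xij{ij})$ by a permutation matching conjugacy in $\Sn_3$ together with an overall nonzero scalar $\mu$ (since $\Aut$ of $V_3$ in $\ydst$ is $\ku^{\times}$ on each isotypic piece). Thus any isomorphism is, up to inner, given by an element of $\Gamma=\ku^{\times}\times\Sn_3$: the $\Sn_3$-part conjugates the transpositions and the $\ku^{\times}$-part rescales the $x$'s, and transporting the relations \eqref{eq:rels-ideal} under this action transforms $(a_1,a_2)$ precisely by \eqref{equ:action} — this is the computation the author flags as using Lemma \ref{le:similar to trivial casiprimitive}: matching the generator $\xij{13}$ to a new generator forces the displayed formulas. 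Conversely, each element of $\Gamma$ does give an isomorphism $\cA_{[a_1,a_2]}\simeq\cA_{[(a_1,a_2)\triangleleft\gamma]}$ by direct substitution. Hence $\cA_{[a_1,a_2]}\simeq\cA_{[b_1,b_2]}$ iff $(b_1,b_2)$ and $(a_1,a_2)$ lie in the same $\Gamma$-orbit, i.e. $[a_1,a_2]=[b_1,b_2]$.

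\textbf{Main obstacle.} The delicate point is part (a): extracting the exact inhomogeneous terms $\phi(\xij{ij}^2)\in\ku^{\Sn_3}$. One must carefully expand $\D(\xij{ij}^2)$ inside $T(V_3)\#\ku^{\Sn_3}$ (remembering the braiding and the smash-product coproduct), push it through $\phi$, compare with Lemma \ref{le:filtration of a Hopf algebra with coradical the dual of kG}\eqref{item:c}, and invoke Lemma \ref{le:similar to trivial casiprimitive} to solve for the coefficients; getting the precise coefficient pattern in \eqref{eq:rels-ideal} (and seeing that only two free parameters survive after using $\Sn_3$-equivariance and the primitivity constraint $\phi(\sum\xij{ij}^2)=0$) is the crux of the whole theorem. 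The Diamond Lemma verification in (b) is laborious but routine once the (0,0) case is granted, and (c) is a bookkeeping computation with the action \eqref{equ:action}.
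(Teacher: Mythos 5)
Your plan follows essentially the same route as the paper: (b) via the Diamond Lemma applied to the relation list \eqref{eq:rels-ideal} augmented by derived straightening relations, (a) via \cite[Thm. 4.5]{AHS} and \cite[Thm. 2.1]{andrusgrania3} to get $\gr A\simeq\BV(V_3)\#\ku^{\Sn_3}$, the map $\phi$ of Proposition~\ref{prop: A with chevalley property is a quotient}, Lemma~\ref{lem: relations in some liftings of BVnxSn}, and Lemma~\ref{le:similar to trivial casiprimitive} to pin down $\phi(\xij{ij}^2)$ up to two parameters, and (c) by reducing any isomorphism to an element of $\Gamma$ acting by \eqref{equ:action}. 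The only real difference is presentational: the paper realizes your ``matrix-coalgebra-plus-grouplike situation'' explicitly by passing to the differences $c_1=\xij{13}^2-\xij{12}^2$ and $c_2=\xij{23}^2-\xij{12}^2$, which span a copy of $M(e,\rho)$ inside $T(V_3)$, and it is the resulting coaction (i.e.\ the comultiplication in $A$), rather than the adjoint $\ku^{\Sn_3}$-equivariance you invoke (which acts through $\e$ on $A_1^e$), that couples the three squares and makes Lemma~\ref{le:similar to trivial casiprimitive} applicable.
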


\begin{proof}
(a) If $\gr A\simeq R\#\ku^{\Sn_3}$, then $V=R^1\in\ydSndual{3}$ and $\BV(V)$ is a braided Hopf subalgebra of $R$ by \cite[Prop. 2.2]{andrussch3}.
Thus $V\simeq V_3$ because $\BV(V_3)$ is the only Nichols algebra of finite dimension in ${}_{\ku\Sn_3}^{\ku\Sn_3}\yd$ by \cite[Thm. 4.5]{AHS}.
Since $\BV(V_3)$ only depends on the braiding of $V$, we can deduce from \cite[Thm. 2.1]{andrusgrania3} that $R=\BV(V_3)$.

Let $\rho:\Sn_3 \to GL(2, \ku)$ be the irreducible representation defined in the canonical basis $(e_i)$ by
$$\rho(12)\cdot e_1=e_2, \qquad \rho(123)\cdot e_1=- e_2.$$
Let $f_{ji}$ be the matrix coefficients of $\rho$; thus
\begin{align*}
\left(
\begin{matrix}
f_{11} & f_{12}\\
f_{21} & f_{22}
\end{matrix}
\right)=\left(
\begin{matrix}
\delta_{e}+\delta_{(13)}-\delta_{(23)}-\delta_{(132)} & \delta_{(12)}-\delta_{(23)}+\delta_{(123)}-\delta_{(132)}\\
\delta_{(12)}-\delta_{(13)}-\delta_{(123)}+\delta_{(132)} & \delta_{e}-\delta_{(13)}+\delta_{(23)}-\delta_{(123)}\\
\end{matrix}
\right)
\end{align*}

Considering $T(V_3) \in {}_{\ku\Sn_3}^{\ku\Sn_3}\yd$, we see that the elements
$c_1 :=\xij{13}^2-\xij{12}^2$ and $c_2 :=\xij{23}^2-\xij{12}^2$ of $T(V)$ span a Yetter-Drinfeld submodule
isomorphic to $M(e,\rho)$, via $e_i\mapsto c_i$, $1\le i \le 2$.
Hence this subspace becomes a submodule in $\ydSndual{3}$, with
coaction $\lambda$ given by $\lambda(c_i) = \sum_{j=1}^2 e_{ij} \ot c_j$, where $e_{ij}:=\cS^{-1}(f_{ji})$ for $i,j=1,2$.
Then $$\D\phi(c_i)=\phi(c_i)\ot1+\sum_{j}e_{ij}\ot\phi(c_j).$$
Since $\phi(c_1),\,\phi(c_2)\in A_1^e=\ku^{\Sn_3}$, we conclude that they belong to $\ku 1 \oplus C$, where $C$ is the simple subcoalgebra of rank $2$
spanned the $e_{ij}$'s. By Lemma \ref{le:similar to trivial casiprimitive},
there exist $a_1, a_2\in\ku$ such that $\phi(c_i)=a_i-\sum_{j=1}^2a_je_{ij}$ for $i=1,2$ . Note that the elements \eqref{eq:rels-ideal} form a basis of
$$M_{(a_1,a_2)}=\langle c_i-a_i+\sum_{j}a_je_{ij},\, R_{(13)(23)},\, R_{(23)(13)}, \sum_{(ij)\in\cO_2^3}\xij{ij}^2\rangle,$$
which is a coideal contained in $\ker\e$. Then the ideal $\cI_{(a_1,a_2)}$ generated by $M_{(a_1,a_2)}$ is a Hopf ideal
and $\cI_{(a_1,a_2)}\cap(\ku\oplus V_3)\#\ku^{\Sn_3}=0$ because $\cI_{(a_1,a_2)}\subset\ker\phi$.
Thus $\gr(T(V_3)\#\ku^{\Sn_3}/\cI_{(a_1,a_2)})\simeq R\#\ku^{\Sn_3}$ and $R$ is generated by $\phi(V_3)$.
Hence $R\simeq T(V_3)/I$ with $I\subseteq\cJ_3$. Moreover, the generators of $\cJ_3$ are contained in $I$
by the definition of $\cI_{(a_1,a_2)}$ and therefore $I=\cJ_3$. Then  $\dim A=\dim (T(V_3)\#\ku^{\Sn_3}/\cI_{(a_1,a_2)})$,
that is, $\ker\phi=\cI_{(a_1,a_2)}$.

\bigbreak We claim that $\cB=\{x\delta_g|x\in B,\,g\in\Sn_3\}$ is a basis of $\cA_{[a_1,a_2]}$ where
$$
B=\left\{
\begin{matrix}
1, &\xij{13}, &\xij{13}\xij{12}, &\xij{13}\xij{12}\xij{13}, &\xij{13}\xij{12}\xij{23}\xij{12},\\
   &\xij{23}, &\xij{12}\xij{13}, &\xij{12}\xij{23}\xij{12},\\
   &\xij{12}, &\xij{23}\xij{12}, &\xij{13}\xij{12}\xij{23},\\
   &          &\xij{12}\xij{23}
\end{matrix}
\right\}
$$
and therefore (b) follows. Next, we sketch a proof of the claim using the Diamond Lemma \cite{diamondlemma}.

We need to introduce more relations which are deduced from
\eqref{eq:rels-ideal}. We write the relations of the form $R=f$ with $R$ a monomial of $\cA_{[a_1,a_2]}$ and $f\in\ku\cB$. The new list of relations is
\begin{align*}
1=&\sum_{g\in\Sn_3}\delta_g, \quad \delta_g\delta_h=\delta_{g,h}\delta_g, \quad \delta_g\xij{ij}=\xij{ij}\delta_{(ij)g},\\
\xij{13}^2=&(a_1-a_2)(\delta_{(12)}+\delta_{(123)})+a_1(\delta_{(23)}+\delta_{(132)}),\\
\xij{23}^2=&a_2(\delta_{(13)}+\delta_{(123)})+(a_2-a_1)(\delta_{(12)}+\delta_{(132)}),\\
\xij{12}^2=&-a_1(\delta_{(23)}+\delta_{(123)})-a_2(\delta_{(13)}+\delta_{(132)}),\\
\xij{13}\xij{23}=&-\xij{23}\xij{12}-\xij{12}\xij{13},\\
\xij{23}\xij{13}=&-\xij{12}\xij{23}-\xij{13}\xij{12},\\
\xij{12}\xij{13}\xij{12}=&\xij{13}\xij{12}\xij{13}+\xij{23}a_1,\\
\xij{23}\xij{12}\xij{23}=&\xij{12}\xij{23}\xij{12}-\xij{13}a_2\,\mbox{ and}\\
\xij{23}\xij{12}\xij{13}=&\xij{13}\xij{12}\xij{23}+\xij{12}\Omega
\end{align*}
where $\Omega=(a_2-a_1)(\dij{(12)}-\dij{e})+a_1(\dij{(13)}-\dij{(132)})-a_2(\dij{(23)}-\dij{(123)})$. Following the Diamond Lemma, we have to show that if $X,Y,Z$ are monomial
in $A\setminus\{1\}$ such that $R_1=XY$ and $R_2=YZ$ then $f_1Z$ and $Xf_2$ can be reduced to a same element in $\ku\cB$ using the before list of relations --
$XYZ$ is called ``overlap ambiguity`` and if the above is true it said that the ambiguity is ''resoluble``; it is defined in \cite{diamondlemma} other type
of ambiguity but this does not happen in our case. Calculate the ambiguities and show that these are resoluble is a tedious but straightforward computation.

\bigbreak (c) We denote the elements in $\cA_{[a_1,a_2]}$ and $\cA_{[b_1,b_2]}$ by $\ao{x}\delta_g$ and $\bo{x}\delta_g$ for $x\in T(V_3)$, $g\in\Sn_3$.
Let $\Theta:\cA_{[b_1,b_2]}\rightarrow\cA_{[a_1,a_2]}$ be an isomorphism of Hopf algebras. Since $(\Theta_{|\ku^{\Sn_3}})^*$
induces a group automorphism of $\Sn_3$, $\Theta(\delta_g)=\delta_{\theta g\theta^{-1}}$ for some $\theta\in\Sn_3$.
By the adjoint action of $\ku^{\Sn_3}$,
$\Theta({}^b\overline{\xij{ij}})\in\sum_{g\in\Sn_3}\lambda_g\ao{x_{\theta(ij)\theta^{-1}}}\delta_g$
with $\lambda_g\in\ku$ but since $\Theta$ is a coalgebra morphism, all these $\lambda_g$'s  are equal.
Then for each $(ij)\in\cO_2^3$ there exists $\mu_{(ij)}\in\ku^{\times}$ such that $\Theta(\bo{\xij{ij}})=\mu_{(ij)}\ao{x_{\theta(ij)\theta^{-1}}}$. Also,
$$0=\ao{\mu_{(23)}\mu_{(13)}\xij{23}\xij{13}+\mu_{(13)}\mu_{(12)}\xij{13}\xij{12}+\mu_{(12)}\mu_{(23)}\xij{12}\xij{23}}$$ because it is equal to
$\Theta(\bo{R_{(13)(23)}})$ if $\theta=(12)$ or  to $\Theta(\bo{R_{(23)(13)}})$ if $\theta=(123)$.
Then $\mu_{(23)}\mu_{(13)}=\mu_{(13)}\mu_{(12)}=\mu_{(12)}\mu_{(23)}$ because otherwise $\dim\cA(a_1,a_2)$
would be less than $72$. Since $\mu_{(ij)}\neq0$ it results that
$\Theta(\bo{\xij{ij}})= \mu\,\ao{x_{\theta(ij)\theta^{-1}}}$ with $\mu\in\ku^{\times}$
for all $(ij)\in\cO_2^3$. Since $(\delta_g)_{g\in\Sn_3}$ is  linearly independent, we obtain
that $(b_1,b_2)=(a_1,a_2)\triangleleft(\mu^2,\theta)$ by the equality $\Theta(\bo{\xij{12}^2})=\ao{x_{\theta(12)\theta^{-1}}^2}$.
Conversely, given $(\mu, \theta)\in\Gamma$, the map $\Theta_{\mu, \theta}:T(V_3)\#\ku^{\Sn_3}\rightarrow T(V_3)\#\ku^{\Sn_3}$
defined by $\Theta_{\mu, \theta}(\delta_g)=\delta_{\theta g\theta^{-1}}$, $\Theta_{\mu, \theta}(\xij{ij})
=\mu x_{\theta (ij)\theta^{-1}}$, for $g\in\Sn_3$. $(ij)\in\cO_2^3$, is a Hopf algebra isomorphism such
that $\Theta_{\mu,\theta}(\cI_{(a_1,a_2)})=\cI_{(a_1,a_2)\triangleleft(\mu^2,\theta)^{-1}}$.
\end{proof}


\begin{thebibliography}{CDMM}

\bibitem[AD]{andrudevoto} {\sc Andruskiewitsch N., Devoto J.},
Extensions of Hopf algebras, {\it Algebra i Analiz} {\bf 7}, No. 1
(1995), 22-61.

\bibitem[AG1]{andrusgrania} {\sc Andruskiewitsch, N., Gra\~na, M. },
Braided Hopf algebras over non abelian finite group, {\it Bol. Acad. Ciencias (C\'ordoba)} \textbf{63} (1999), 45-78.

\bibitem[AG2]{andrusgrania3} {\sc Andruskiewitsch, N., Gra\~na, M. }, Examples of liftings of Nichols algebras over racks, {\it Algebra Montp.
Announc.} 2003, Paper 1, 6 pp. (electronic).

\bibitem[AHS]{AHS}  {\sc N. Andruskiewitsch, I. Heckenberger and  H.-J. Schneider},
{\em The Nichols algebra of a semisimple Yetter-Drinfeld module},
Amer. J. Math. \textbf{132},  (2010).

\bibitem[AN]{andrunatale} {\sc Andruskiewitsch N., Natale S.},
Counting arguments for Hopf algebras of low dimension, {\it
Tsukuba Math J.} {\bf 25}, No. 1 (2001), 187-201.


\bibitem[AnS1]{andrussch2} {\sc Andruskiewitsch, N. }, {\sc Schneider, H.-J. },
Lifting of Quantum Linear Space and Pointed Hopf algebras of order $p^3$,
{\it J. Algebra} \textbf{209} (1998), no. 2,  658-691.

\bibitem[AnS2]{andrussch3} {\sc Andruskiewitsch, N. }, {\sc Schneider, H.-J. }, Pointed Hopf Algebras, in ``New directions in Hopf
algebras'', {\it Math. Sci. Res. Inst. Publ.}, Cambridge Univ. Press, Cambridge, \textbf{43} (2002),
1-68.

\bibitem[ArM\c S]{ardizzonimstefan}{\sc Ardizzoni A., Menini C. and \c Stefan D.}, A Monoidal Approach to Splitting Morphisms of Bialgebras, {\it Trans. Amer. Math. Soc.} {\bf 359}  (2007), 991-1044.


\bibitem[B]{diamondlemma} {\sc Bergman, G. M.}, The Diamond Lemma for Ring Theory, {\it Adv. in Math.} {\bf 29} (1978), 178-218.

\bibitem[CDMM]{de1tipo6chevalley} {\sc Calinescu C., D\u{a}sc\u{a}lescu S.,
Masuoka A., Menini C.}, Quantum lines over non-cocommutative
cosemisimple Hopf algebras, {\it J. Algebra} {\bf 273} (2004),
753-779.



\bibitem[G]{grania} {\sc Gra\~na, M. },
Zoo of finite-dimensional Nichols algebras of non-abelian group type, available at \texttt{http://mate.dm.uba.ar/$\sim$matiasg/zoo.html}.

\bibitem[Ma]{Mj} {\sc S. Majid},   Crossed products of braided
groups and bosonization. \emph{J. Algebra} \textbf{163} (1994),  165-190.

\bibitem[M]{mongomeri} {\sc Montgomery S.},
Hopf Algebras and their Actions on Rings, CBMS Reg. Conf.
Ser. Math. 82, {\it Amer. Math. Soc.} (1993).

\bibitem[MS]{milinskisch} {\sc Milinski, A., Schneider H-J.}, Pointed indecomposable Hopf algebras over
Coxeter groups, in ``New Trends in Hopf algebras Theory'', {\it Contemp. Math.} {\bf 267} (2000), 215-236.


\bibitem[R]{radford} {\sc Radford, D.}, Hopf algebras with projection, {\it J. Algebra} {\bf 92} (1985),
322-347.
\end{thebibliography}
\end{document}